\newtheorem{thm}{Theorem}[section]
\newtheorem{cor}[thm]{Corollary}
\theoremstyle{definition}
\theoremstyle{remark}
\newtheorem{rem}[thm]{Remark}
\numberwithin{equation}{section}
\begin{document}

\title[On dissolving knot surgery \(4\)-manifolds under a \(\mathbb{CP}^2\)-connected sum]{On dissolving knot surgery \(4\)-manifolds under a \(\mathbb{CP}^2\)-connected sum }

\author{Hakho Choi}
\address{Department of Mathematical Sciences, Seoul National University, Seoul 08826, Korea }
\email{hako85@snu.ac.kr}

\author{Jongil Park}
\address{Department of Mathematical Sciences, Seoul National University, Seoul 08826, Korea \& Korea Institute for Advanced Study, Seoul 02455, Korea}
\email{jipark@snu.ac.kr}

\author{ Ki-Heon Yun}
\address{Department of Mathematics,  Sungshin Women's University, Seoul 02844, 
Korea}
\email{kyun@sungshin.ac.kr}%

\thanks{}
\subjclass[2010]{14J27, 57N13, 57R55}%
\keywords{Almost completely decomposable,  knot surgery $4$-manifold}
\date{\today}

\begin{abstract} 
In this article we prove that, if $X$ is a smooth $4$-manifold containing an embedded double node neighborhood, all knot surgery 4-manifolds \(X_K\) are mutually diffeomorphic to each other after a connected sum with \(\mathbb{CP}^2\). 
Hence, by applying  to the simply connected elliptic surface $E(n)$, we also show that every knot surgery 
\(4\)-manifold \(E(n)_K\)  is almost completely decomposable.
\end{abstract}

\maketitle

\section{introduction}

Since gauge theory was introduced in 1982,
 topologists and geometers working on $4$-manifolds have developed
 various techniques and they have obtained many fruitful and remarkable results
 on $4$-manifolds in last 30 years. Among them, a knot-surgery technique introduced by
 R. Fintushel and R. Stern turned out to be one of most effective techniques to modify
 smooth structures without changing the topological type of a given
 $4$-manifold~\cite{FS:98}.
 Note that Fintushel-Stern's knot surgery $4$-manifold $X_K$ is
 following:
 Suppose that $X$ is a smooth $4$-manifold containing
 an embedded torus $T$ of square $0$.
 Then, for any knot $K \subset S^3$,
 one can construct a new smooth $4$-manifold, called {\em a knot surgery $4$-manifold},
 \begin{equation*}
  X_K = X\sharp_{T=T_m} (S^1\times M_K)
 \end{equation*}
 by taking a fiber sum along a torus $T$ in $X$ and $T_m = S^1\times m$ in
 \mbox{$S^1 \times M_K$}, where $M_K$ is a $3$-manifold obtained by doing
 $0$-framed surgery along $K$ and $m$ is the meridian of $K$.
 Then Fintushel and Stern proved that, under a mild condition on $X$
 and $T$, the knot surgery $4$-manifold $X_K$ is homeomorphic,
 but not diffeomorphic, to a given $X$.
 Furthermore,
 they initially conjectured for the simply connected
 elliptic surface $E(2)$ that the classification
 of all knot surgery $4$-manifolds of the form $E(2)_K$ up to diffeomorphism
 is the same as the classification of all knots in $S^3$
 up to knot equivalence~\cite{FS_ICM98}.
 Although some partial progresses related to the conjecture were obtained by
 S. Akbulut~\cite{Akbulut:02} and M. Akaho~\cite{Akaho:06}, the full conjecture is still remained open. That is, it is not settled down  yet whether  the smooth classification of knot surgery $4$-manifolds is equivalent to the classification of prime knots in $S^3$ up to  mirror image.

On the other hand, C.T.C. Wall~\cite{Wall:1964} proved a stabilization property of smooth $4$-manifolds: 
If two simply connected smooth $4$-manifolds $X$ and $X'$ have isomorphic intersection forms, then there exists an integer $k$ such that $X \sharp k(S^2 \times S^2)$ and  $X' \sharp k(S^2 \times S^2)$ are diffeomorphic to each other. One of the interesting questions on the stabilization problem is to find the smallest such an integer $k$. 
S. Akbulut~\cite{Akbulut:02} and D. Auckly~\cite{Auckly:00} showed that $k=1$ is enough for a family of knot surgery $4$-manifolds $X_K$. That is,  
\(X_K \sharp (S^2 \times S^2)\) is diffeomorphic to \(X \sharp (S^2 \times S^2) \)  and  
\(X_K \sharp (S^2 \tilde\times S^2)\) is also diffeomorphic to \(X \sharp (S^2 \tilde\times S^2)\) for any knot $K$ in $S^3$.
R. Baykur and N. Sunukjian~\cite{BaySun:2013} also proved a single stabilization for a family of $4$-manifolds obtained by logarithmic transforms.  

In the same spirit as stabilization problems, topologists have also studied whether two smooth $4$-manifolds with isomorphic intersection forms are diffeomorphic to each other after a connected sum with \(\mathbb{CP}^2\).
In this article, we obtain an affirmative answer to this problem for a large family of knot surgery $4$-manifolds. 
That is, we prove that, if $X$ is a smooth $4$-manifold containing an embedded double node neighborhood,  a codimension zero submanifold obtained from $(S^1 \times S^1)\times D^2$ by attaching two $(-1)$-framed $2$-handles along the first $S^1$ factor in $\partial ((S^1 \times S^1)\times D^2)$, then all knot surgery 4-manifolds \(X_K\) obtained by performing a knot surgery operation along a torus in the double node neighborhood become mutually diffeomorphic after a connected sum with  \(\mathbb{CP}^2\). Explicitly, we get 

\begin{thm}\label{Thm-1}
Suppose that $X$ is a smooth $4$-manifold containing an embedded double node neighborhood. Let  \(X_K\) be a knot surgery $4$-manifold obtained by 
performing a knot surgery operation along a torus in the double node neighborhood. Then
\(
X_K \sharp \mathbb{CP}^2 \text{ is diffeomorphic to }  X \sharp \mathbb{CP}^2
\)
 for any knot $K$ in $S^3$.
\end{thm}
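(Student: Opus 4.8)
The plan is to reduce the statement to the effect of a single crossing change of the knot, and then to show that such a crossing change can be absorbed by handle slides once a single copy of \(\mathbb{CP}^2\) is present. Every knot \(K \subset S^3\) is transformed into the unknot \(U\) by a finite sequence of crossing changes \(K = K_0, K_1, \dots, K_n = U\), and knot surgery along the unknot recovers the ambient manifold, so \(X_U = X\). Hence it suffices to prove the local statement that, whenever \(K'\) is obtained from \(K\) by changing one crossing, \(X_{K'} \sharp \mathbb{CP}^2\) is diffeomorphic to \(X_K \sharp \mathbb{CP}^2\) using only one fixed \(\mathbb{CP}^2\) summand. Transitivity of diffeomorphism then chains the equivalences \(X_K \sharp \mathbb{CP}^2 \cong X_{K_1}\sharp \mathbb{CP}^2 \cong \cdots \cong X_U \sharp \mathbb{CP}^2 = X \sharp \mathbb{CP}^2\); the essential point is that each step is a diffeomorphism between manifolds carrying a single \(\mathbb{CP}^2\), so no copies accumulate along the sequence.

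For the local statement I would first realize the crossing change as a torus surgery. A crossing change of \(K\) is produced by \(\pm 1\)-surgery on a small unknotted circle \(c \subset S^3\) bounding a disk that meets \(K\) in two points of opposite sign, so that \(c\) is nullhomologous in the knot complement. Passing to \(M_K\) and then to \(S^1 \times M_K\), the manifolds \(S^1 \times M_K\) and \(S^1 \times M_{K'}\) differ by a \(\pm 1\) torus surgery (a logarithmic transform) on the nullhomologous torus \(T_c = S^1 \times c\), and since this surgery is supported away from the fiber-sum region, \(X_{K'}\) is obtained from \(X_K\) by the same torus surgery along \(T_c\). The role of the double node neighborhood is to supply the geometric room to isotope \(T_c\) into a standard position: the two \((-1)\)-framed \(2\)-handles furnish two fishtail fibers sharing the vanishing cycle, namely the first \(S^1\) factor, and I would arrange \(T_c\) so that its surgery slope is controlled by this vanishing cycle. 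In this position the torus surgery becomes a twist along a vanishing cycle sitting inside a node neighborhood, an operation amenable to handle calculus.

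Finally I would trade the torus surgery for a blow-up. After connect-summing with \(\mathbb{CP}^2\), the exceptional sphere provides a \((+1)\)-framed \(2\)-handle that can be slid over the \((-1)\)-framed handles of the nodes; the net effect is to change the surgery coefficient on \(T_c\) by \(\mp 1\), converting the nontrivial \(\pm 1\) torus surgery into the trivial product surgery and recovering \(X \sharp \mathbb{CP}^2\) on the nose, with the single \(\mathbb{CP}^2\) blown back down at the end of each step. I expect the main obstacle to lie precisely here: one must verify that the isotopy of \(T_c\) into the node neighborhood carries the \(\pm 1\) surgery slope to exactly the slope that the node's vanishing cycle can cancel, and that one \(\mathbb{CP}^2\) is enough to absorb every crossing change without accumulating blow-ups. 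Checking that \emph{two} nodes rather than one are what make the signs balance—so that the recovered summand is \(\mathbb{CP}^2\) and not \(\overline{\mathbb{CP}^2}\)—is the delicate bookkeeping at the heart of the argument.
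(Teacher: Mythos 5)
Your outer reduction is exactly the paper's: induct on an unknotting sequence \(K=K_0, K_1,\dots,K_n=U\), absorb one crossing change at a time, use \(X_U=X\), and note that each step hands back the auxiliary \(\mathbb{CP}^2\) summand so that blow-ups do not accumulate. The gap is in the local step, which is the entire content of the theorem. You propose to realize the crossing change as a \(\pm1\) torus surgery on the nullhomologous torus \(T_c=S^1\times c\) and then to ``isotope \(T_c\) into a standard position'' inside the double node neighborhood so that its surgery slope is controlled by the vanishing cycle. No such isotopy is available, and none is justified: \(T_c\) sits near a crossing of \(K\), deep inside the \(S^1\times M_K\) piece of the fiber sum, while the two \((-1)\)-framed vanishing cycles are attached along the first \(S^1\) factor of the core torus \(T=S^1\times m_K\) on the other side of the gluing. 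These tori are not parallel, and the standardization results your sketch implicitly invokes (log transforms on a \emph{fiber-parallel} torus becoming standard after a blow-up in the presence of a fishtail or cusp) do not apply to a nullhomologous torus in this position. Establishing that the \(\pm1\) surgery on \(T_c\) can be traded against the exceptional sphere is precisely what must be proved, not a step that can be quoted; your closing remarks about slopes and signs correctly locate the bookkeeping but the bookkeeping cannot start until that geometric claim is supplied.

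For comparison, the paper's mechanism is purely diagrammatic and never touches \(T_c\). Working in a Kirby diagram of \(E(K)\times S^1\) drawn inside the double node neighborhood (Figure~\ref{Fig-4}), the disjoint \((+1)\)-framed unknot from \(\mathbb{CP}^2\) is slid over the two \((-1)\)-framed vanishing cycles to produce a \((-1)\)-framed circle encircling two antiparallel strands of the carved ribbon \(1\)-handle \(K\sharp K^*\), together with a conveniently placed \(0\)-framed \(2\)-handle (Figures~\ref{Fig-5}--\ref{Fig-6}); a double slide of that \(0\)-framed handle over the \((-1)\)-framed one (Figure~\ref{Fig-7}) inserts a full twist on the two strands, i.e., effects the crossing change, and the auxiliary moves are then reversed to restore a standalone \((+1)\)-framed unknot (Figure~\ref{Fig-12}). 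Two verifications that your sketch omits are essential there: the count \(+1-1-1=-1\) explains why exactly two nodes are needed to convert the \((+1)\)-framed exceptional class into a usable \((-1)\)-framed circle, and condition (3) of Section~\ref{Section-2} --- the two strands are oppositely oriented, so the blackboard framing changes from \(-\ell\) to \(-(\ell+2)\) and the \(0\)-framed longitude of \(K\) is carried to the \(0\)-framed longitude of \(K'\) --- is what guarantees the reglued manifold is actually \(X_{K'}\sharp\mathbb{CP}^2\).
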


\begin{rem}
Recently S. Akbulut suggested to us that Theorem 1.1 above is still true for a smooth $4$-manifold containing a fishtail neighborhood. The essential part is that, by using the $(+1)$-framed $2$-handle coming from $\mathbb{CP}^2$ blow-up and a $(-1)$-framed $2$-handle in a fishtail neighborhood, we can get a $0$-framed $2$-handle in a meridian of the slice $1$-handle which is linked with a $(+1)$-framed 2-handle. We slide this $(+1)$-framed $2$-handle over the slice $1$-handle and then cancel the slice $1$-handle and a $0$-framed $2$-handle pair. By this way the slice $1$-handle turns into $(+1)$-framed 2-handle, then by the $2$-handle slides indicated in ~\cite[Figure 6.13 of p.72]{Akbulut:2016book}, we can dissolve the manifold. 
\end{rem}

Finally, people also studied  an \emph{almost completely decomposable} (ACD) property for simply connected smooth $4$-manifolds. Note that a simply connected smooth $4$-manifold $X$ is \emph{completely decomposable} if $X$ is diffeomorphic to $\sharp k\mathbb{CP}^2 \sharp \ell \overline{\mathbb{CP}^2}$ for some integers $k$ and $\ell$, and \emph{almost completely decomposable} if $X\sharp \mathbb{CP}^2$ is completely decomposable.  
R. Mandelbaum and B. Moishezon showed that many complex surfaces are almost completely decomposable. For example, they showed that smooth hypersurfaces in $\mathbb{CP}^3$, simply connected elliptic surfaces $E(n)$ and $E(n)_{p,q}$  are almost completely decomposable~\cite{Mandelbaum:1976, Moishezon:77, MANDELBAUM:1980aa, Gompf:1989}.
In this article, we also investigate this problem for knot surgery $4$-manifolds $E(n)_K$, where $E(n)$ is the simply connected elliptic surface with Euler characteristic $12n$.  
Since it is well known that the Dolgachev surface $E(1)_{2,3}$ can be identified with a knot surgery $4$-manifold $E(1)_K$, where $K$ is the trefoil knot, it is natural to ask whether every knot surgery $4$-manifold $E(n)_K$ is almost completely decomposable or not. 
By applying Theorem~\ref{Thm-1} above  to  $E(n)$ and  combining a Moishezon's old result~\cite{Moishezon:77} that every simply connected elliptic surface is almost completely decomposable, we conclude that  

\begin{cor} \label{Cor-1}
For any knot $K$ in $S^3$, 
\( E(n)_K \sharp \mathbb{CP}^2\) is completely decomposable.
\end{cor}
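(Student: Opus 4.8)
The plan is to derive the corollary directly from Theorem~\ref{Thm-1} together with Moishezon's decomposability theorem~\cite{Moishezon:77}; the only genuinely geometric input required is the observation that the elliptic surface \(E(n)\) contains an embedded double node neighborhood whose central torus serves as a knot surgery torus. Once this is in place, the argument is purely formal: Theorem~\ref{Thm-1} trades the knot surgery for a connected sum with \(\mathbb{CP}^2\), and Moishezon's theorem dissolves the resulting manifold.

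First I would exhibit the double node neighborhood inside \(E(n)\). Consider the standard elliptic fibration \(E(n)\to S^2\). Its monodromy factorization can be arranged, by Hurwitz moves, so that two of the nodal (fishtail) fibers carry isotopic vanishing cycles; equivalently, \(E(n)\) contains two disjoint fishtail fibers whose vanishing cycles are parallel in a common regular fiber. A tubular neighborhood of the union of these two fibers, together with an annulus cobounding the parallel vanishing cycles, is then diffeomorphic to \((S^1\times S^1)\times D^2\) with two \((-1)\)-framed \(2\)-handles attached along parallel copies of the first \(S^1\) factor in \(T^3=\partial((S^1\times S^1)\times D^2)\); that is, it is an embedded double node neighborhood \(N\subset E(n)\). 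The central torus \(S^1\times S^1\times\{0\}\subset N\) is a regular fiber \(T\) of square zero, and since all regular fibers are mutually isotopic as framed tori, I may take \(T\) to be the torus along which the Fintushel--Stern knot surgery~\cite{FS:98} is performed. Thus \(E(n)_K\) is, up to diffeomorphism, a knot surgery \(4\)-manifold obtained by operating along a torus in the double node neighborhood \(N\), so Theorem~\ref{Thm-1} applies with \(X=E(n)\).

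With this identification secured, the remainder is immediate. By Theorem~\ref{Thm-1}, \(E(n)_K\sharp\mathbb{CP}^2\) is diffeomorphic to \(E(n)\sharp\mathbb{CP}^2\) for every knot \(K\subset S^3\). By Moishezon's theorem~\cite{Moishezon:77}, \(E(n)\) is almost completely decomposable, so \(E(n)\sharp\mathbb{CP}^2\) is diffeomorphic to a connected sum \(\sharp\, k\,\mathbb{CP}^2\,\sharp\,\ell\,\overline{\mathbb{CP}^2}\), with \(k=2n\) and \(\ell=10n-1\) as forced by the Euler characteristic and signature of \(E(n)\). Composing the two diffeomorphisms yields that \(E(n)_K\sharp\mathbb{CP}^2\) is completely decomposable.

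The main obstacle is entirely contained in the second step: one must be certain that the surgery torus defining \(E(n)_K\) really lies inside an \emph{embedded} double node neighborhood, and not merely inside a single cusp or fishtail neighborhood. The subtle point is that a cusp fiber perturbs into two nodes whose vanishing cycles meet transversally in a point, which is \emph{not} a double node neighborhood; one needs instead two nodes with disjoint, parallel vanishing cycles and with both handle framings equal to \(-1\). Verifying that such a configuration occurs in \(E(n)\), and that a regular fiber of the ambient fibration can be isotoped into it, is the only place where the geometry of elliptic fibrations enters, and it is where I would concentrate the careful work; everything after it is a formal concatenation of known diffeomorphisms.
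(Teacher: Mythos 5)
Your proposal is correct and follows essentially the same route as the paper: both reduce the statement to Theorem~\ref{Thm-1} plus Moishezon's result, and both locate the required embedded double node neighborhood in $E(n)$ by manipulating the monodromy factorization $(\alpha\beta)^{6n}$ so that two consecutive nodal fibers have parallel vanishing cycles (the paper does this explicitly via the braid relation $\alpha\beta\alpha\beta=\alpha\alpha\beta\alpha$, which is the concrete form of the Hurwitz-move argument you sketch). The point you flag as the crux --- that one needs two nodes with \emph{parallel} vanishing cycles rather than the transversally intersecting pair coming from a cusp --- is exactly the point the paper's proof addresses with that relation.
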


\begin{rem}
Shortly after this article was announced, R. Baykur gave an alternative proof of Corollary~\ref{Cor-1} using $5$-dimensional cobordism arguments~\cite{Baykur:2017}. 
\end{rem}


\subsection*{Acknowledgements}

The authors would like to thank Selman Akbulut, Ronald Fintushel, Robert Gompf, Daniel Ruberman and Ronald Stern for their valuable comments. Jongil Park was supported by Samsung Science and Technology Foundation under Project Number SSTF-BA1602-02. 
He also holds a joint appointment at KIAS and in the Research Institute of Mathematics, SNU. Ki-Heon Yun was partially supported by the Basic Science Research Program through the National Research Foundation of Korea  funded by the Ministry of Education (2015R1D1A1A01058941).



\section{A Kirby diagram of knot surgery \texorpdfstring{$4$}{4}-manifolds}
\label{Section-2}

In this section we first briefly review how to draw a Kirby diagram of $E(K) \times S^1$, where $E(K) = S^3 - \nu(K)$ denotes the knot complement of $S^3$. And then we find some conditions on the Kirby diagram so that we can get a smooth operation in a knot surgery $4$-manifold $X_K$ which changes the knot $K$ to $K'$ by adding a full twist to $K$ but $X_{K'}$ remains diffeomorphic to $X_K$. 

\subsection{Kirby diagram of $E(K) \times S^1$}

It is well known how to draw a Kirby diagram of $E(K) \times S^1$, which is following (refer to \cite{Akbulut:02, Akbulut:2016book} or \cite{GS:99}): 
%
%
\begin{figure}[htbp]
\begin{center}
\begin{tikzpicture}[scale=0.8]
\begin{scope}[scale=0.5]
\draw (2,2.5)--(4,2.5)--(4,5.5)--(2,5.5)--cycle;
\draw (2.25, 4) node[draw=none, fill=none, right] {$T_K$};
\draw[very thick] (2,3) .. controls +(-2,0) and +(-2,0) .. (2,1)-- (4,1) ..controls +(2,0) and +(2,0) .. (4,3);
\draw[very thick] (2,5) .. controls +(-2,0) and +(-2,0) .. (2,7)-- (4,7) ..controls +(2,0) and +(2,0) .. (4,5);
\draw (3,3) circle (3);
\draw (0, 3) .. controls +(0, -0.5) and +(0, -0.5) .. (6, 3);
\draw[dashed] (0, 3) .. controls +(0, 0.5) and +(0, 0.5) .. (6, 3);
\draw (6, 3) node[draw=none, fill=none, right] {$B_1^3$};
\draw (-2, -0.5) node[draw=none, fill=none, right] {(a) Union of two $2$-tangles};
\end{scope}

\begin{scope}[shift={(6,0)}, scale=0.5]
\draw (2,2.5)--(4,2.5)--(4,5.5)--(2,5.5)--cycle;
\draw (2.25, 4) node[draw=none, fill=none, right] {$T_K$};
\draw[very thick, ->] (2,3) .. controls +(-2,0) and +(-2,0) .. (2,1)-- (4,1) ..controls +(2,0) and +(2,0) .. (4,3);
\draw[very thick, <-] (2,5) .. controls +(-2,0) and +(-2,0) .. (2,7)-- (4,7) ..controls +(2,0) and +(2,0) .. (4,5);
\draw (3,4) circle (2.5);
\draw (0.5, 4) .. controls +(0, -0.5) and +(0, -0.5) .. (5.5, 4);
\draw[dashed] (0.5, 4) .. controls +(0, 0.5) and +(0, 0.5) .. (5.5, 4);
\draw (5.5, 4) node[draw=none, fill=none, right] {$B_1^3$};
\draw (-2, -0.5) node[draw=none, fill=none, right] {(b) Union of two $4$-tangles};
\end{scope}
\end{tikzpicture}
\caption{Decomposition of $S^3$ and $K$}
\label{Fig-1}
\end{center}
\end{figure}
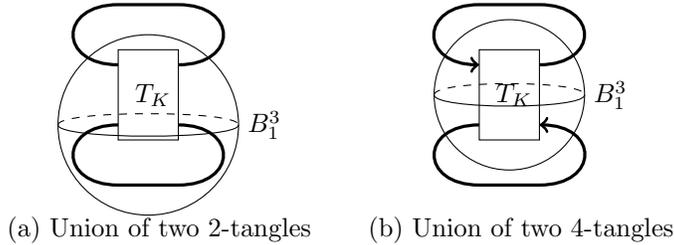
%
Let us consider $S^3$ as a union of two $3$-balls $B_1^3 \cup B_2^3$, $B_1^3 \cap B_2^3 = \partial B_1^3 = \partial B_2^3$, as in Figure~\ref{Fig-1} (a), so that $K_i = K \cap B_i^3$ is a $2$-tangle for each $i=1, 2$. 
Then $E(K) \times S^1$ is obtained from $E(K) \times [0,1]$ by identifying one end $E(K) \times \{0\}$ with the other end $E(K) \times \{1\}$ and $E(K)= (B_1^3 - \nu(K_1) )\cup (B_2^3 - \nu(K_2) )$. 
Since $(B_1^3 - \nu(K_1) ) \times [0,1]$ is the  complement of an embedded disk from the $4$-ball $B^4 = B_1^3 \times [0,1]$, we can draw it as a carving $1$-handle which is dotted $K\sharp K^*$ as in Figure~\ref{Fig-2}, where $K^*$ is the mirror image of $K$. We will get one $1$-handle, the dotted circle at the bottom of Figure~\ref{Fig-2}, and one $0$-framed $2$-handle per each $1$-handle in $(B_1^3 - \nu(K_1))$ drawn as a $0$-framed $2$-handle going through the dotted circle at the bottom of Figure~\ref{Fig-2}.

\begin{figure}[htbp]
\begin{center}

\begin{tikzpicture}[scale= 0.5]
\begin{scope}
\begin{knot}[
	clip width=5,
	clip radius = 2pt,
	end tolerance = 1pt,
]
\strand[very thick] (4, 5)--(8,5);
\strand[thick] (2,3.5)--(4,3.5)--(4,5.5)--(2,5.5)--cycle;
\strand[thick, shift={(6,0)}] (2,3.5)--(4,3.5)--(4,5.5)--(2,5.5)--cycle;
\strand[black, very thick]  (10,5) .. controls +(2,0) and +(2,0) .. (10,7) -- (2,7)..controls +(-2,0) and +(-2,0) .. (2,5);
\strand[black, very thick] (2,4)..controls +(-1,0) and +(-1,0) ..(2,3)--(4,3)..controls +(1,0) and +(1,0)..(4,4);
\strand[black, very thick, shift={(6,0)}] (2,4)..controls +(-1,0) and +(-1,0) ..(2,3)--(4,3)..controls +(1,0) and +(1,0)..(4,4);
\strand (2,3.8)..controls +(-0.8,0) and +(-0.8,0) ..(2,3.2)--(4,3.2)..controls +(0.8,0) and +(0.8,0)..(4,3.8);
\strand (4,5.2)--(5,5.2)..controls +(1,0) and +(1,0)..(5,6.8)-- (2,6.8)..controls +(-1.8,0) and +(-1.8,0) .. (2,5.2);
\strand[blue, very thick, shift={(0,0.3)}] (5, -0.5)..controls +(0, -0.5) and +(0, -0.5)..(6, -0.5)--(6, 1.5)..controls +(0,0.5) and +(0,0.5) ..(5,1.5)--(5,-0.5);
\strand[black, thick] ((2,1.5)..controls +(0,-0.5) ..(3,1)--(9,1) ..controls +(1,0) ..(10,3)..controls +(0, 0.5) and +(0,0.5)..(9,3)--(9,2)..controls +(0, -0.5)..(8,1.5)--(4,1.5)..controls +(-1,0) ..(3,3)..controls +(0, 0.5) and +(0,0.5)..(2,3)--cycle;
\strand[black, thick] (0.5,0)--(11.5,0)..controls +(0.5, 0) ..(12, 6)..controls +(0,0.5) and +(0,0.5) ..(11,6)--(11,1)..controls +(0,-0.5) ..(10.5, 0.5)--(1.5, 0.5)..controls +(-0.5, 0)..(1, 1)--(1, 6)..controls +(0,0.5) and +(0,0.5) .. (0,6)--(0, 0.5)..controls +(0,-0.5)..(0.5,0);
\strand (5.5,2.2) ellipse (0.3 and 0.4);
\strand (8,7) ellipse (0.4 and 0.6);
\flipcrossings {1,4,5, 7,9, 12,14,15,16,19,20,23}
\end{knot}

\draw (5, 0.75) node[circle, fill, inner sep=2pt, blue] {}; 
\draw (6, 5) node[circle, fill, inner sep=2pt, black] {}; 
\draw (12, 0) node[right] {$0$}; 
\draw (10, 1) node[right] {$0$}; 
\draw (1.2, 4.6) node[right] {$\vdots$}; 
\draw (10.2, 4.6) node[right] {$\vdots$}; 
\draw (2.5, 4.5) node[draw=none, fill=none, right] {\Large $T_K$};
\draw (8.5, 4.5) node[draw=none, fill=none, right] {\Large $T_K^*$};
\draw (5.5, 3) node {$b$}; 
\draw (8, 6) node {$a$}; 
\draw (6, 6) node {$c$}; 
\draw (3, 6.9) node[shape=rectangle, draw=black, fill=white, fill opacity=1] {$\ell$}; 
\end{scope}

\draw[very thick, ->] (12.5, 3)-- node[above, text width=2cm, align=center] {$\phi$} (13.5,3);

\begin{scope}[shift={(15,0)}]
\begin{knot}[
	clip width=4,
	end tolerance = 1pt,
]
\strand[thick] (0,0)--(4,0) ..controls +(1,0).. (5,1)--(5,5) ..controls +(0,1).. (4,6)--(0,6)..  controls +(-1,0) and +(-1,0).. (0,4.5)--(2,4.5) ..controls +(1,0).. (3,3.5)--(3,2.5)..controls +(0,-1)..(2,1.5)--(0,1.5)..  controls +(-1,0) and +(-1,0).. (0,0);
\strand[thick] (4,3) ellipse (1.5 and 0.5);
\strand[thick] (0.4,3.1) ellipse (0.5 and 2.3);
\strand (0.1,3.1) ellipse (0.6 and 0.5);
\strand (4.2,2.3) ellipse (0.5 and 0.6);
\strand (3,6) ellipse (0.4 and 0.5);
\flipcrossings {1,3,6,7,9,12,14}
\end{knot}
\draw (3.5, 2) node {$b$}; 
\draw (3.7, 6.5) node {$a$}; 
\draw (-0.8, 3) node {$c$}; 
\draw (-0.5, 4) node {$0$}; 
\draw (1.5, 4.5) node[circle, fill, inner sep=2pt, black] {}; 
\draw (2.5, 3) node[circle, fill, inner sep=2pt, black] {}; 
\end{scope}
\useasboundingbox ([shift={(1mm,1mm)}]current bounding box.north east) rectangle ([shift={(-1mm,-1mm)}]current bounding box.south west);
\end{tikzpicture}

\caption{A Kirby diagram of $E(K) \times S^1$ and a boundary diffomorphism}
\label{Fig-2}
\end{center}
\end{figure}
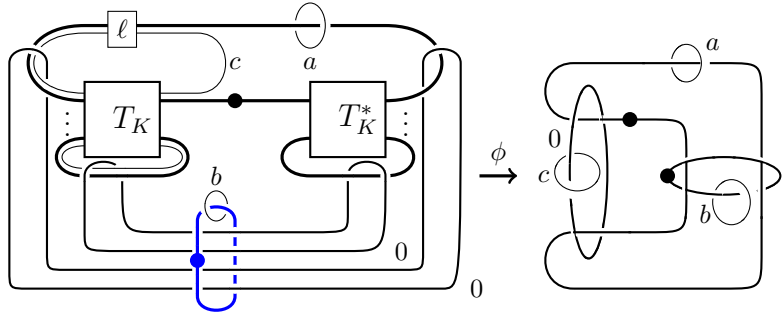

Since the knot surgery $4$-manifold $X_K$ is constructed as a union $(X - \nu(F)) \cup_\phi (E(K) \times S^1)$, where $F$ is an embedded torus of square $0$ in $X$ and a diffeomorphism $\phi : \partial (E(K) \times S^1) \to \partial (X - \nu(F)) = \partial (F \times D^2)$ is chosen so that $\phi([m_K \times S^1]) = [F]$ and $\phi([\ell_K])= [\partial D^2]$,  we have to identify the images of three simple closed curves named $a$, $b$ and $c$ under the map $\phi$. 
It is well known that the map $\phi$ sends $a$, $b$ and $c$ in the left figure to the same lettered circle in the right figure as in Figure~\ref{Fig-2} ~\cite{Akbulut:02} respectively. Here $\ell$ denotes the negative of the blackboard framing of $K$ and the small box with letter $\ell$ means $\vert \ell \vert$ times right-handed full twists if $\ell$ is a positive integer and left-handed full twists if $\ell$ is a negative integer. 
In this article we always assume that the curve $c$ in left side of Figure~\ref{Fig-2}, which is corresponding to a $0$-framed longitude of $K$, is given by blackboard framing in the $4$-tangle part drawn as $T_K$ in a rectangle.

\subsection{Smooth operation on  $X_K$}
Recently R. Gompf~\cite{Gompf:2016a, Gompf:2016} constructed an infinite order cork, which is a contractible $4$-manifold $C$ with an infinite-order self-diffeomorphism of its boundary $f: \partial C  \to \partial C$ not extending to a diffeomorphism of $C$. S. Akbulut~\cite{Akbulut:2016} and M. Tange~\cite{Tange:2016} also constructed an example of infinite order corks.
  It is known that the existence of cork is closely related to that of exotic smooth structures on $4$-manifolds and the construction of an infinite order cork is based on how a knot surgery $4$-manifold is changed under torus twist or $\delta$-move. Gompf also found a condition on torus twist which does not change a smooth structure in some cases. 
Similarly, we try to find a smooth operation in the Kirby diagram of a knot surgery $4$-manifold which does not change a smooth structure. 
For this purpose, we first review  torus twist or $\delta$-move. 
Here is a description of Akbulut's $\delta$-move (\cite{Akbulut:2016, Gompf:2016}): 
 Let $X$ be any $4$-manifold with boundary, $\gamma$ be a circle in $\partial X$, and $\delta \subset X$ be an unknot in $\partial X$ obtained by connected summing two parallel copies of $\gamma$ along a possibly  complicated band.  Then $\delta$-move is a diffeomorphism 
$ f_\delta : \partial X \to \partial X $
obtained by first introducing a $2$-handle/$3$-handle canceling pair whose $2$-handle is attached along $\delta$ with $0$-framing, 
then blowing up along $\gamma_\pm$ a $(\pm1)$-framed circle, sliding it along the $0$-framed $\delta$, and then blowing down along $\gamma_\mp$ circle again. 
This procedure is explained in Figure~\ref{Fig-3} below.
 

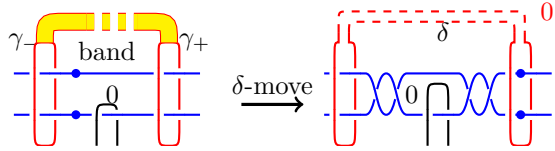
\begin{figure}[htbp]

\begin{tikzpicture}[scale=0.55]
\begin{knot}[
	scale=0.5,
	clip width=5,
	clip radius = 2pt,
	end tolerance = 0.5pt,
	consider self intersections=true,
]

\strand[red, thick] (0.5,0) .. controls +(0.5,0) .. (1,1) -- (1, 4.5) .. controls +(0,0.5)  .. (0.5,5) .. controls  +(-0.5,0)  .. (0,4.5) --(0,1) .. controls +(0,-1) ..(0.5,0);
\strand[red, thick, shift={(6,0)}] (0.5,0) .. controls +(0.5,0) .. (1,1) -- (1, 4.5) .. controls +(0,0.5)  .. (0.5,5) .. controls  +(-0.5,0)  .. (0,4.5) --(0,1) .. controls +(0,-1) ..(0.5,0);
\strand[blue, thick] (-1,1.5) -- (8, 1.5);
\strand[blue, thick] (-1, 3.5) -- (8, 3.5);
\strand[thick] (3,0)--(3, 1) .. controls +(0,1) ..(3.5,2) .. controls +(0.5,0) .. (4,1) -- (4,0);
\strand[red, double=yellow, double distance=6pt] (0.5,5) .. controls +(0,1) .. (1.5, 6)--(2.5,6);
\strand[red, double=yellow, double distance=6pt, dashed] (2.5,6)--(5,6);
\strand[red, double=yellow, double distance=6pt] (5,6)--(5.5,6)..controls +(1,0) .. (6.5, 5);
\draw (-0.5, 5) node[draw=none, fill=none] {$\gamma_-$};
\draw (7.75, 5) node[draw=none, fill=none] {$\gamma_+$};
\draw (3, 2.5) node[draw=none, fill=none, right] {$0$};
\draw (3.5, 5.5) node[draw=none, fill=none, below] {band};
\draw (2, 1.5) node[circle, fill, inner sep=1.2pt, blue] {}; 
\draw (2, 3.5) node[circle, fill, inner sep=1.2pt, blue] {}; 
\flipcrossings {1,2,5,6,9};
\end{knot}

\draw[very thick, ->] (5, 1)-- node[above, text width=2cm, align=center] {$\delta$-move} (6.5,1);

\begin{scope}[shift={(8, 0)}]
\begin{knot}[
	scale=0.5,
	clip width=5,
	clip radius = 2pt,
	end tolerance = 0.5pt,
	consider self intersections=true,
]
\strand[blue, thick] (-2,1.5)--(0,1.5) .. controls +(0.5,0) and +(-0.5,0) .. (1, 3.5) .. controls +(0.5,0) and +(-0.5,0)..(2, 1.5)--(4.5, 1.5) ..controls +(0.5,0) and +(-0.5,0) .. (5.5, 3.5) .. controls +(0.5,0) and +(-0.5,0)..(6.5, 1.5)-- (9, 1.5);
\strand[blue, thick] (-2,3.5)--(0,3.5) .. controls +(0.5,0) and +(-0.5,0) .. (1, 1.5) .. controls +(0.5,0) and +(-0.5,0)..(2, 3.5)--(4.5, 3.5) ..controls +(0.5,0) and +(-0.5,0) .. (5.5, 1.5) .. controls +(0.5,0) and +(-0.5,0)..(6.5, 3.5)-- (9, 3.5);
\strand[black, thick] (3,0) --(3,2)..controls +(0,1) .. (3.5, 3)..controls +(0.5,0).. (4,2) --(4,0);
\strand[red, thick,shift={(-1.5, 0)}] (0.5,0) .. controls +(0.5,0) .. (1,1) -- (1, 4) .. controls +(0,1)  .. (0.7,5)--(0.7,5.5)  (0.2,5.5)--(0.2,5) .. controls  +(-0.2,0)  .. (0,4) --(0,1) .. controls +(0,-1) ..(0.5,0);
\strand[red, thick, shift={(7,0)}] (0.5,0) .. controls +(0.5,0) .. (1,1) -- (1, 4) .. controls +(0,1)  .. (0.7,5)--(0.7,5.5)  (0.2,5.5)--(0.2,5) .. controls  +(-0.2,0)  .. (0,4) --(0,1) .. controls +(0,-1) ..(0.5,0);
\strand[red, thick, dashed] (-0.8,5.5) .. controls +(0,0.5) .. (-0.3,6)--(6.7,6) .. controls +(0.5,0) .. (7.2, 5.5);
\strand[red, thick, dashed] (-1.3,5.5)--(-1.3,6) .. controls +(0,0.5) .. (-0.8,6.5)--(7.2,6.5) .. controls +(0.5,0) .. (7.7, 6)--(7.7,5.5);
\draw (3, 2.5) node[draw=none, fill=none, left] {$0$};
\draw (7.5, 1.5) node[circle, fill, inner sep=1.2pt, blue] {}; 
\draw (7.5, 3.5) node[circle, fill, inner sep=1.2pt, blue] {}; 
\draw (3, 5.5) node[draw=none, fill=none, right] {$\delta$};
\draw (8, 6.5) node[draw=none, fill=none, right, red] {$0$};
\flipcrossings {2, 12, 3, 5, 8, 10, 14};
\end{knot}
\end{scope}

\end{tikzpicture}

\caption{$\delta$-move}
\label{Fig-3}
\end{figure}

We note that $\delta$-move is a boundary diffeomorphism and  this diffeomorphism usually does not extend to the whole $4$-manifold $X$.
The main reason is that blowing-up and blowing-down operations along $\gamma_\pm$ in $\delta$-move are boundary operation, not a $4$-dimensional operation.
But in some case the boundary diffeomorphism can extend to the whole $4$-manifold and such a phenomenon was already observed by Gompf~\cite[Section 4]{Gompf:2016}. 
In a Kirby diagram of knot surgery $4$-manifolds, when we perform Kirby moves, it is not easy to control a carving $1$-handle, coming from carving out a slice disk. 
But we can observe that, if we have a $(\pm 1)$-framed $2$-handle along $\gamma_-$, two strands of a $1$-handle (or two $1$-handles) going through $\gamma_-$ and a $0$-framed $2$-handle located as in Figure~\ref{Fig-3}, then there is a method to introduce a pair of opposite twists on these two strands:  If we slide twice this $0$-framed $2$-handle in Figure~\ref{Fig-3} over  $(\pm 1)$-framed $2$-handle parallel to $\gamma_-$, then it gives a pair of twists on the two strands passing through the circle $\gamma$. So we can generate a pair of opposite twists between two strands, a part of $1$-handles, without blowing-up/blowing-down operations.
This operation sends  the $(\pm 1)$-framed $2$-handle along $\gamma_-$ to the $(\pm 1)$-framed $2$-handle along $\gamma_+$ (Figure~\ref{Fig-7}).
Now assume that $K'$ is a knot obtained from $K$ by adding a full twist corresponding to the above operation.
Then we have to find a sequence of handle slides which sends the $(\pm 1)$-framed $2$-handle along $\gamma_+$ back to the $(\pm 1)$-framed $2$-handle along $\gamma_-$ and also sends a $0$-framed longitude of $K$ to a $0$-framed longitude of $K'$ in the Kirby diagram because we needs an operation without changing its smooth structure.
Under the conditions that 
\begin{itemize}
\item[(1)] there is a $(\pm 1)$-framed $2$-handle along $\gamma_-$ and a $0$-framed $2$-handle as in Figure~\ref{Fig-3} so that Kirby moves in Figure~\ref{Fig-7} can be applied,
\item[(2)] there is a sequence of Kirby moves which sends the $(\pm 1)$-framed $2$-handle along $\gamma_+$ to the $(\pm 1)$-framed $2$-handle along $\gamma_-$ and
\item[(3)] the two strands are oriented oppositely when we consider $K$ as an oriented knot so that this process sends a $0$-framed longitude of $K$ to that of $K'$,
 \end{itemize}
we can get a smooth operation in a knot surgery $4$-manifold $X_K$ which changes the knot $K$ to $K'$ by adding a full twist to $K$ but $X_{K'}$ remains diffeomorphic to $X_K$. 
We will prove in Section~\ref{Section-3} that,  if we take a connected sum of $\mathbb{CP}^2$ with a smooth $4$-manifold $X$ which contains an embedded double node neighborhood and if we perform a knot surgery operation in the double node neighborhood, then these conditions are satisfied.


%
\section{Proof of Theorem~\ref{Thm-1}}\label{Section-3}
%
Let $X$ be a smooth $4$-manifold which contains an embedded double node neighborhood, a codimension zero submanifold obtained from $(S^1 \times S^1)\times D^2$ by attaching two $(-1)$-framed $2$-handles along  along the first $S^1$ factor in $\partial ((S^1 \times S^1)\times D^2)$,  and $K$ be any knot in $S^3$. We perform a knot surgery operation along this torus and let $X_K$ be the resulting knot surgery $4$-manifold.

First we will show that $X_K \sharp \mathbb{CP}^2$ is diffeomorphic to $X_{K'} \sharp \mathbb{CP}^2$ using a Kirby diagram of $E(K) \times S^1$ explained in Section~\ref{Section-2}, where $K'$ is a knot obtained by changing a crossing in $K$. 
If we perform a knot surgery along the torus of square $0$ in the embedded double node neighborhood,
then the two vanishing cycles, $(-1)$-framed $2$-handles, can be drawn as in Figure~\ref{Fig-4}.
Note that Figure~\ref{Fig-4} without the two $(-1)$-framed $2$-handles is diffeomorphic to Figure~\ref{Fig-2}  because they are related by a sequence of $1$-handle slides over $1$-handle and  $1$-handle/$2$-handle pair cancellations (refer to ribbon move~\cite[Chapter 6]{GS:99}). 
%
%
\begin{figure}[hbtp]
\begin{center}

\begin{tikzpicture}[]
\begin{knot}[
	scale=0.33,
	clip width=5,
	clip radius = 2pt,
	end tolerance = 0.5pt,
	consider self intersections=true,
]

\strand (1,1)--(25,1);
\strand (1,3)--(2,3);
\strand (4,3)--++(18,0);
\strand (24,3)--++(1,0);
\strand (1,5)--(2,5);
\strand (4,5)--++(18,0);
\strand (24,5)--++(1,0);
\strand (1,7)--++(1,0);
\strand (4,7)--(25,7);
\strand (1,1)..controls +(-1,0) and +(-1,0)..++(0,2);
\strand (1,5)..controls +(-1,0) and +(-1,0)..++(0,2);
\strand (25,1)..controls +(1,0) and +(1,0)..++(0,2);
\strand (25,5)..controls +(1,0) and +(1,0)..++(0,2);
\strand[blue, thin] (1,1.5)--++(5,0);
\strand[blue, thin] (1,2.5)--++(1,0);
\strand[blue, thin] (4,2.5)--++(2,0);
\strand[blue, thin] (1,5.5)--++(1,0);
\strand[blue, thin] (4,5.5)--++(2,0);
\strand[blue, thin] (1,6.5)--++(1,0);
\strand[blue, thin] (4,6.5)--++(2,0);
\strand[blue, thin] (1,5.5)..controls +(-0.5, 0) and +(-0.5,0)..++(0,1);
\strand[blue, thin] (1,1.5)..controls +(-0.5, 0) and +(-0.5,0)..++(0,1);
\strand[blue, thin] (6,1.5)..controls +(0.5, 0) and +(0.5,0)..++(0,1);
\strand[blue, thin] (6,5.5)..controls +(0.5, 0) and +(0.5,0)..++(0,1);
\strand[red] (10, 5.6)..controls +(0.5,0) and +(0.5,0) ..++(0,-1)..controls +(-0.5,0) and +(-0.5,0) ..++(0, 1);
\strand[red] (10, 3.4)..controls +(0.5,0) and +(0.5,0) ..++(0,-1)..controls +(-0.5,0) and +(-0.5,0) ..++(0, 1);
\strand (13, 3.5)..controls +(0.5,0) and +(0.5,0) ..++(0,-3)..controls +(-0.5,0) and +(-0.5,0) ..++(0, 3);
\strand (13, 0.2)..controls +(0.5,0) and +(0.5,0) ..++(0,-3)..controls +(-0.5,0) and +(-0.5,0) ..++(0, 3);

\strand (2,0)--++(0,1.4)..controls +(0, 0.5) and +(0,0.5)..++(0.5,0)--++(0, -1.4);
\strand (24,0)--++(0,1.4)..controls +(0, 0.5) and +(0,0.5)..++(0.5,0)--++(0, -1.4);
\strand (2,0)..controls +(0, -1)..++(0.5,-1)--++(21.5,0)..controls +(0.5,0)..++(0.5,1);
\strand ((2.5,0)..controls +(0, -0.5)..++(0.5,-0.5)--++(20.5,0)..controls +(0.5,0)..++(0.5,0.5);
\strand (-0.5, 6.2)--++(1.3,0)..controls +(0.5,0) and +(0.5,0)..++(0,-0.5)--++(-1.3,0);
\strand (-0.5,6.2) ..controls +(-1, 0)..++(-1,-0.7)--++(0, -7);
\strand (-0.5,5.7) ..controls +(-0.5, 0)..++(-0.5,-0.7)--++(0, -6);

\strand (26.5, 6.2)--++(-1.3,0)..controls +(-0.5,0) and +(-0.5,0)..++(0,-0.5)--++(1.3,0);
\strand (26.5,6.2) ..controls +(1, 0)..++(1,-0.7)--++(0, -7);
\strand (26.5,5.7) ..controls +(0.5, 0)..++(0.5,-0.7)--++(0, -6);

\strand (-1,-1)..controls +(0, -0.5)..++(0.5,-0.5)--++(27,0)..controls +(0.5,0)..++(0.5,0.5);
\strand (-1.5,-1)..controls +(0, -1)..++(0.5,-1)--++(28,0)..controls +(0.5,0)..++(0.5,1);

\flipcrossings {14, 20, 17, 3, 12, 8, 10, 2, 21, 22, 23,24,16, 6};

\draw (2,2.3) rectangle (4,5.7);
\draw (3,4) node {\Large $T_K$}; 
\draw (22,2.3) rectangle (24,5.7);
\draw (23,4) node {\Large $T_K^*$}; 
\draw (2,6.3) rectangle (4,7.2);
\draw (3,6.7) node {\tiny $\ell$}; 

\draw (15, 7) node[circle, fill, inner sep=1.2pt] {}; 
\draw (15, 3) node[circle, fill, inner sep=1.2pt] {}; 

\draw (13,-2.8) node[circle, fill, inner sep=1.2pt, black] {}; 
\draw (0,4.3) node[right] {$\vdots$}; 
\draw (25,4.3) node[right] {$\vdots$}; 
\draw (10, 5.6) node[above, red] {$-1$}; 
\draw (10, 2.4) node[below, red] {$-1$}; 
\draw (13,3.5) node[above, black] {$0$}; 
\draw (0,0) node[right] {$\cdots$}; 
\draw (25,0) node[right] {$\cdots$}; 
\draw (-1,0) node[right] {$0$}; 
\draw (3,0) node[right] {$0$}; 
\end{knot}
\useasboundingbox ([shift={(1mm,1mm)}]current bounding box.north east) rectangle ([shift={(-1mm,-1mm)}]current bounding box.south west);
\end{tikzpicture}

\caption{$E(K) \times S^1$ in a double node neighborhood}
\label{Fig-4}
\end{center}
\end{figure}
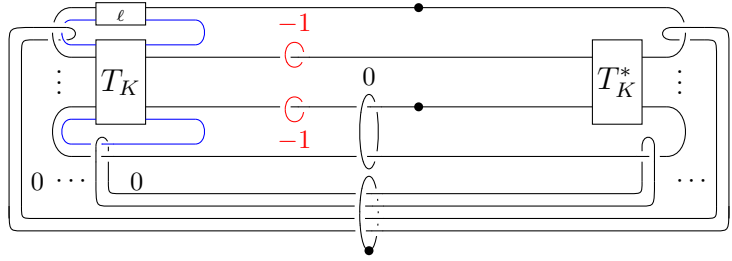

If we take a connected sum with $\mathbb{CP}^2$, then a disjoint $(+1)$-framed unknot is introduced. Now we perform a series of $2$-handle slides as in Figure~\ref{Fig-5}: First slide the $(+1)$-framed $2$-handle over two $(-1)$-framed $2$-handles coming from vanishing cycles in the double node neighborhood, so that we get a $(-1)$-framed $2$-handle as in the middle of Figure~\ref{Fig-5}. And slide again two vanishing cycles over the $(-1)$-framed $2$-handle to get a linked $0$-framed $2$-handles in the last of Figure~\ref{Fig-5}. By using this process, we get Figure~\ref{Fig-6} from Figure~\ref{Fig-4}.
Now we get Figure~\ref{Fig-8} by sliding the $0$-framed $2$-handle in the middle of Figure~\ref{Fig-6} as in Figure~\ref{Fig-7}.
And then slide  the $(-1)$-framed $2$-handle over two $0$-framed $2$-handles passing through the bottom dotted circle to get Figure~\ref{Fig-9}.
We slide twice an embedded circle corresponding to the $0$-framed longitude of the knot $K$ over the $(-1)$-framed $2$-handle in order to get Figure~\ref{Fig-10}.  Let $T_{K'}$ be a $4$-tangle obtained from $T_K$ by adding a right-handed full  twist as in Figure~\ref{Fig-11}. Then we apply a reversed sequence of Kirby moves of Figure~\ref{Fig-5}, so that we get Figure~\ref{Fig-12}.


\begin{figure}[htbp]
\begin{center}



\caption{$\big( E(K') \times S^1\big) \sharp \mathbb{CP}^2$ in a double node neighborhood}
\label{Fig-12}
\end{center}
\end{figure}

Since we select an orientation of a knot diagram of $K$ and we choose a $4$-tangle as in Figure~\ref{Fig-1} (b), 
the knot $K'$ obtained by closing the tangle $T_{K'}$ as in Figure~\ref{Fig-1} has a blackboard framing $-(\ell + 2)$, so that the embedded circle in Figure~\ref{Fig-12} is a $0$-framed longitude of $K'$, where $-\ell$ is the blackboard framing of $K$. Hence it implies that
\(
X_K \sharp \mathbb{CP}^2 
\)
is diffeomorphic to 
\(
X_{K'} \sharp \mathbb{CP}^2
\).

Note that, in the process of Kirby moves in Figure~\ref{Fig-7}, if we start from a $(-1)$-framed $2$-handle  located in the right-handed side of the $0$-framed $2$-handle and we slide it from the left  to the right of the $(-1)$-framed $2$-handle, then it will give a left-handed full twist in $T_{K'}$ and its blackboard framing will be $-(\ell -2)$. So this operation also sends  $0$-framing of $K$ to $0$-framing of $K'$. 
Hence, when we get $T_{K'}$ from $T_K$, we may add a right-handed full twist or a left-handed full twist and this operation sends $0$-framing of $K$ to $0$-framing of $K'$.  
It implies that we can perform this operation in the direction of reducing the unknotting number of $K$, \emph{i.e} the unknotting number of $K'$ is less than the unknotting number of $K$. Since unknotting number is a nonnengative integer, we get the unknot only after finitely many steps.
Therefore we conclude that
\(
X_K \sharp \mathbb{CP}^2 \) is diffeomorphic to 
\( X_U \sharp \mathbb{CP}^2 \),
where $U$ is the unknot. Furthermore, when we perform a knot surgery on $X$ using the unknot $U$, the knot surgery 4-manifold $X_U$ is the same as the original $X$. Hence we are done.  \hfill \qed

\begin{proof}[Proof of Corollary~\ref{Cor-1}]
Note that the simply connected elliptic surface $E(n)$ with Euler characteristic $12$ has a monodromy factorization of the form 
\( (\alpha \beta)^{6n}\),
where $\alpha$ and $\beta$ are right-handed Dehn twists  along simple closed curves in a generic fiber $F$ which are parallel to the circle $a$ and $b$ in Figure~\ref{Fig-2} respectively.  By using the braid relation $\alpha\beta\alpha = \beta\alpha\beta$, we always have a portion of the form $\alpha\alpha$ in the monodromy factorization of $E(n)$ because $\alpha \beta \alpha \beta = \alpha \alpha \beta \alpha$.
Since $E(n)_K = E(n) \sharp_{F = m_K \times S^1} (M_K \times S^1)$ and $F$ is a generic elliptic fiber of $E(n)$ which is the core torus in a double node neighborhood $N(\alpha \alpha)$, Theorem~\ref{Thm-1} above implies that $E(n)_K\sharp \mathbb{CP}^2$ is diffeomorphic to $E(n)\sharp \mathbb{CP}^2$. Furthermore, since it is a well-known fact that 
\(E(n)\) is almost completely decomposable, i.e., 
$E(n) \sharp \mathbb{CP}^2$ is diffeomorphic to $(2n) \mathbb{CP}^2 \sharp (10n -1) \overline{\mathbb{CP}^2}$~\cite{MANDELBAUM:1980aa, Moishezon:77}, corollary follows. 
\end{proof}


\section{Examples}

In this section, we provide a global Kirby diagram for two families of knot surgery $4$-manifolds in order to explain almost complete decomposability. The first example is the Dolgachev surface $E(1)_{2,3}$, whose almost complete decomposability was already known before. We explain this fact using an argument in the proof of main theorem. Note that  the Dolgachev surface $E(1)_{2,3}$ is diffeomorphic to a knot surgery $4$-manifold $E(1)_{3_1}$ (equivalently, $E(1)_{3_1^*}$),
where $3_1$ and $3_1^*$ denote a left- and right-handed trefoil knot in $S^3$ respectively. Hence it suffices to see how $E(1)_{3_1^*}$ is untwisted after a connected sum with $\mathbb{CP}^2$.

\medskip

\noindent
\textbf{$E(1)_{3_1^*}$ case:} A Kirby diagram of $E(1)_{3_1^*}$ can be drawn as in Figure~\ref{Fig-13} (up to $3$-handles and a $4$-handle). If we take a connected sum with $\mathbb{CP}^2$, then we get a disjoint $(+1)$-framed $2$-handle on unknot. This Kirby diagram satisfies all conditions in Theorem~\ref{Thm-1} above so that we can apply all operations used in the proof of it. Therefore the tangle part is changed as in Figure~\ref{Fig-12} and a curve corresponding to the $0$-framed longitude goes to the $0$-framed longitude of a newly generated knot which is the unknot with two positive crossings and one negative crossing. The other parts coming from $E(1) - \nu(F)$ remain unchanged, so that $E(1)_{3_1^*} \sharp \mathbb{CP}^2 $ is diffeomorphic to $E(1)_U \sharp \mathbb{CP}^2$, where $U$ is the unknot. Here is a detailed proof:
Let us first consider a portion of the Kirby diagram in Figure~\ref{Fig-13} and an extra $(+1)$-framed $2$-handle coming from $\mathbb{CP}^2$ in Figure~\ref{Fig-14}. 
We slide this $(+1)$-framed $2$-handle over two $(-1)$-framed $2$-handles as in Figure~\ref{Fig-5}, so that we get Figure~\ref{Fig-15}. Now we apply the operation in Figure~\ref{Fig-7} to get Figure~\ref{Fig-16}.
We slide the $(-1)$-framed $2$-handle over a $0$-framed $2$-handle to get Figure~\ref{Fig-17}, and 
we slide again this $2$-handle over two $0$-framed $2$-handles passing through the bottom dotted $1$-handle in Figure~\ref{Fig-13} to get Figure~\ref{Fig-18} and Figure~\ref{Fig-19}. 
And then we slide $(-1)$-framed $2$-handle along a $0$-framed longitude of $K$ over this $(-1)$-framed $2$-handle twice to get Figure~\ref{Fig-20}. Finally, by using an isotopy and a reversed operation of Figure~\ref{Fig-5}, we get Figure~\ref{Fig-21} and Figure~\ref{Fig-22}. It gives a Kirby diagram of $E(1)_U\sharp \mathbb{CP}^2$.

\medskip

\begin{figure}[htbp]
\begin{center}


\caption{}
\label{Fig-22}
\end{center}
\end{figure}

\vspace*{\floatsep}

\noindent
\textbf{$E(1)_{K_n}$ case:} 
Let us consider the Stallings knot $K_n$ which is obtained from $3_1 \sharp 3_1^*$  by applying Stallings twist $n$ times. 
A Kirby diagram of $E(1)_{K_0}$ is drawn in Figure~\ref{Fig-23} (up to $3$-handles and a $4$-handle). 
By an operation explained in Figure~\ref{Fig-5}, we get a $(-1)$-framed $2$-handle along a simple closed curve $\gamma_1$ or $\gamma_2$ in Figure~\ref{Fig-23}. 
By using this $(-1)$-framed $2$-handle and by operations explained in proof of Theorem~\ref{Thm-1}, we can show that
\(E(1)_{K_0} \sharp \mathbb{CP}^2\) is diffeomorphic to  \( E(1)_{K_n} \sharp \mathbb{CP}^2 \)
for any integer $n$. Here is a sketch of proof:
For a positive integer $n$, we perform the following sequence of Kirby moves $n$ times repeatedly. 
(For a negative integer $n$, we put a $(-1)$-framed $2$-handle along $\gamma_1$ and we perform all operations from right to left direction.)

\begin{figure}[htbp]
\begin{center}
\begin{tikzpicture}[scale=0.8]
\begin{knot}[
	scale=0.35,
	clip width=5,
	clip radius = 2pt,
	end tolerance = 0.5pt,
	consider self intersections=true,
]

\strand (1,19) -- ++(22,0);
\strand (1,17) -- ++(22,0);
\strand (1,15)--++(3,0)
..controls +(1,0) and +(-1,0)..++(2,-2)..controls +(1,0) and +(-1,0)..++(2,-2)-- ++(8,0) 
..controls +(1,0) and +(-1,0)..++(2,2)..controls +(1,0) and +(-1,0)..++(2,2)--++(3,0);
\strand (1,13)--++(1,0)
..controls +(1,0) and +(-1,0)..++(2,-2)--++(2,0)..controls +(1,0) and +(-1,0)..++(2,2)-- ++(8,0) 
..controls +(1,0) and +(-1,0)..++(2,-2)--++(2,0)..controls +(1,0) and +(-1,0)..++(2,2)--++(1,0);
\strand (1,11)--++(1,0)
..controls +(1,0) and +(-1,0)..++(2,2)..controls +(1,0) and +(-1,0)..++(2,2)-- ++(12,0) 
..controls +(1,0) and +(-1,0)..++(2,-2)..controls +(1,0) and +(-1,0)..++(2,-2)--++(1,0);

\strand (1,9)--++(1,0)
..controls +(1,0) and +(-1,0)..++(2,-2)..controls +(1,0) and +(-1,0)..++(2,-2)-- ++(12,0) 
..controls +(1,0) and +(-1,0)..++(2,2)..controls +(1,0) and +(-1,0)..++(2,2)--++(1,0);
\strand (1,7) -- ++(1,0)
..controls +(1,0) and +(-1,0)..++(2,2)--++(2,0)..controls +(1,0) and +(-1,0)..++(2,-2)-- ++(8,0) 
..controls +(1,0) and +(-1,0)..++(2,2)--++(2,0)..controls +(1,0) and +(-1,0)..++(2,-2)--++(1,0);
\strand (1,5)--++(3,0)
..controls +(1,0) and +(-1,0)..++(2,2)..controls +(1,0) and +(-1,0)..++(2,2)-- ++(8,0) 
..controls +(1,0) and +(-1,0)..++(2,-2)..controls +(1,0) and +(-1,0)..++(2,-2)--++(3,0);
\strand (1,3) -- ++(22,0);
\strand (1,1) -- ++(22,0);

\strand (1,1)..controls +(-1,0) and +(-1,0)..++(0,2);
\strand (23,1)..controls +(1,0) and +(1,0)..++(0,2);

\strand (1,5)..controls +(-1,0) and +(-1,0)..++(0,2);
\strand (23,5)..controls +(1,0) and +(1,0)..++(0,2);

\strand (1,9)..controls +(-1,0) and +(-1,0)..++(0,2);
\strand (23,9)..controls +(1,0) and +(1,0)..++(0,2);

\strand (1,13)..controls +(-1,0) and +(-1,0)..++(0,2);
\strand (23,13)..controls +(1,0) and +(1,0)..++(0,2);

\strand (1,17)..controls +(-1,0) and +(-1,0)..++(0,2);
\strand (23,17)..controls +(1,0) and +(1,0)..++(0,2);

\strand[thin, blue] (1,20) -- ++(7,0);
\strand[thin, blue] (1,16.5) -- ++(6,0);
\strand[thin, blue] (1,15.5)--++(3,0)
..controls +(1.2,0) and +(-0.8,0)..++(2,-2)..controls +(1.2,0) and +(-0.8,0)..++(2,-2);
\strand[thin, blue] (1,12.5)--++(1,0)
..controls +(.8,0) and +(-1.2,0)..++(2,-2)--++(2,0)..controls +(1.2,0) and +(-.8,0)..++(2,2);
\strand[thin, blue] (1,11.5)--++(1,0)
..controls +(.8,0) and +(-1.2,0)..++(2,2)..controls +(.8,0) and +(-1.2,0)..++(2,2)-- ++(1,0);

\strand[thin, blue] (1,8.5)--++(1,0)
..controls +(.8,0) and +(-1.2,0)..++(2,-2)..controls +(.8,0) and +(-1.2,0)..++(2,-2)-- ++(1,0) ;
\strand[thin, blue] (1,7.5) -- ++(1,0)
..controls +(.8,0) and +(-1.2,0)..++(2,2)--++(2,0)..controls +(1.2,0) and +(-.8,0)..++(2,-2);
\strand[thin, blue] (1,4.5)--++(3,0)
..controls +(1.2,0) and +(-.8,0)..++(2,2)..controls +(1.2,0) and +(-.8,0)..++(2,2);
\strand[thin, blue] (1,3.5) -- ++(6,0);
\strand[thin, blue] (1,0.5) -- ++(7,0);

\strand[thin, blue] (1,0.5)..controls +(-1.5,0) and +(-1.5,0)..++(0,3);
\strand[thin, blue] (1,4.5)..controls +(-1.5,0) and +(-1.5,0)..++(0,3);
\strand[thin, blue] (1,8.5)..controls +(-1.5,0) and +(-1.5,0)..++(0,3);
\strand[thin, blue] (1,12.5)..controls +(-1.5,0) and +(-1.5,0)..++(0,3);
\strand[thin, blue] (1,16.5)..controls +(-1.5,0) and +(-1.5,0)..++(0,3.5);

\strand[thin, blue] (7,3.5)..controls +(0.5,0) and +(0.5,0)..++(0,1);
\strand[thin, blue] (7,15.5)..controls +(0.5,0) and +(0.5,0)..++(0,1);
\strand[thin, blue] (8,0.5)..controls +(0.6,0)..++(0.6,0.6)--++(0,5.8)..controls +(0,0.6)..++(-0.6,0.6);
\strand[thin, blue] (8,12.5)..controls +(0.6,0)..++(0.6,0.6)--++(0,6.3)..controls +(0,0.6)..++(-0.6,0.6);
\strand[thin, blue] (8,8.5)..controls +(0.6,0)..++(0.6,0.6)--++(0,1.8)..controls +(0,0.6)..++(-0.6,0.6);

\draw (4,20 ) node[above, blue] {\tiny $-1$};

\strand[thick, red] (16, 19.4)..controls +(0.5,0) and +(0.5,0)..++(0,-1)..controls +(-0.5,0) and +(-0.5,0)..++(0,1);
\strand[thick, red] (18, 19.4)..controls +(0.5,0) and +(0.5,0)..++(0,-1)..controls +(-0.5,0) and +(-0.5,0)..++(0,1);

\strand[thick, red] (7, 23.5)..controls +(1.5,0) and +(1.5,0) ..++(0,-3).. controls +(-1.5,0) and +(-1.5,0)..++(0,3);
\strand[thick, red] (8.5, 23.5)..controls +(1.5,0) and +(1.5,0) ..++(0,-3).. controls +(-1.5,0) and +(-1.5,0)..++(0,3);
\strand[thick, red] (10, 23.5)..controls +(1.5,0) and +(1.5,0) ..++(0,-3).. controls +(-1.5,0) and +(-1.5,0)..++(0,3);
\strand[thick, red] (11.5, 23.5)..controls +(1.5,0) and +(1.5,0) ..++(0,-5).. controls +(-1.5,0) and +(-1.5,0)..++(0,5);
\strand[thick, red] (13, 23.5)..controls +(1.5,0) and +(1.5,0) ..++(0,-3).. controls +(-1.5,0) and +(-1.5,0)..++(0,3);
\strand[thick, red] (14.5, 23.5)..controls +(1.5,0) and +(1.5,0) ..++(0,-3).. controls +(-1.5,0) and +(-1.5,0)..++(0,3);

\strand[thick, red] (7, 20.7)..controls +(0.5,0) and +(0.5,0) ..++(0,-1).. controls +(-0.5,0) and +(-0.5,0)..++(0,1);
\strand[thick, red] (13.4, -5.6)..controls +(0.5,0) and +(0.5,0) ..++(0,-1).. controls +(-0.5,0) and +(-0.5,0)..++(0,1);

\draw (16,19.4 ) node[above, red] {\tiny $-1$}; 
\draw (18,19.4) node[above, red] {\tiny $-1$}; 
\draw (7, 23.5) node[above, red] {\tiny $-2$}; 
\draw (8.5, 23.5) node[above, red] {\tiny $-2$}; 
\draw (10, 23.5) node[above, red] {\tiny $-2$}; 
\draw (11.5, 23.5) node[above, red] {\tiny $-1$}; 
\draw (13, 23.5) node[above, red] {\tiny $-2$}; 
\draw (14.5, 23.5) node[above, red] {\tiny $-2$}; 
\draw (7, 20.2) node[below left, red] {\tiny $-2$}; 
\draw (13.5, -6) node[right, red] {\tiny $-1$};

\strand[thin] (0,0)--
++(0, 1)..controls +(0,0.5)..++(0.5,0.5)--
++(1,0)..controls+(0.5,0) and +(0.5,0)..++(0,1)--
++(-1.5,0)..controls +(-0.5,0)..++(-0.5,-0.5)--
++(0,-2.5)..controls +(0,-0.5)..++(0.5,-0.5)--
++(24,0)..controls +(0.5,0)..++(0.5,0.5)--
++(0,2.5)..controls +(0,0.5)..++(-0.5,0.5)--
++(-1.5,0)..controls +(-0.5,0) and +(-0.5,0) ..++(0,-1)--
++(1,0)..controls +(0.5,0)..++(0.5,-0.5)--
++(0,-1) ..controls +(0,-0.5)..++(-0.5,-0.5)--
++(-23,0)..controls +(-0.5,0)..++(-0.5,0.5);
\strand[thin] (-1,-1)--
++(0, 6)..controls +(0,0.5)..++(0.5,0.5)--
++(1,0)..controls+(0.5,0) and +(0.5,0)..++(0,1)--
++(-1.5,0)..controls +(-0.5,0)..++(-0.5,-0.5)--
++(0,-7.5)..controls +(0,-0.5)..++(0.5,-0.5)--
++(26,0)..controls +(0.5,0)..++(0.5,0.5)--
++(0,7.5)..controls +(0,0.5)..++(-0.5,0.5)--
++(-1.5,0)..controls +(-0.5,0) and +(-0.5,0) ..++(0,-1)--
++(1,0)..controls +(0.5,0)..++(0.5,-0.5)--
++(0,-6) ..controls +(0,-0.5)..++(-0.5,-0.5)--
++(-25,0)..controls +(-0.5,0)..++(-0.5,0.5);
\strand[thin] (-2,-2)--
++(0, 11)..controls +(0,0.5)..++(0.5,0.5)--
++(2,0)..controls+(0.5,0) and +(0.5,0)..++(0,1)--
++(-2.5,0)..controls +(-0.5,0)..++(-0.5,-0.5)--
++(0,-12.5)..controls +(0,-0.5)..++(0.5,-0.5)--
++(28,0)..controls +(0.5,0)..++(0.5,0.5)--
++(0,12.5)..controls +(0,0.5)..++(-0.5,0.5)--
++(-2.5,0)..controls +(-0.5,0) and +(-0.5,0) ..++(0,-1)--
++(2,0)..controls +(0.5,0)..++(0.5,-0.5)--
++(0,-11) ..controls +(0,-0.5)..++(-0.5,-0.5)--
++(-27,0)..controls +(-0.5,0)..++(-0.5,0.5);
\strand[thin] (-3,-3)--
++(0, 16)..controls +(0,0.5)..++(0.5,0.5)--
++(3,0)..controls+(0.5,0) and +(0.5,0)..++(0,1)--
++(-3.5,0)..controls +(-0.5,0)..++(-0.5,-0.5)--
++(0,-17.5)..controls +(0,-0.5)..++(0.5,-0.5)--
++(30,0)..controls +(0.5,0)..++(0.5,0.5)--
++(0,17.5)..controls +(0,0.5)..++(-0.5,0.5)--
++(-3.5,0)..controls +(-0.5,0) and +(-0.5,0) ..++(0,-1)--
++(3,0)..controls +(0.5,0)..++(0.5,-0.5)--
++(0,-16) ..controls +(0,-0.5)..++(-0.5,-0.5)--
++(-29,0)..controls +(-0.5,0)..++(-0.5,0.5);
\strand[thin] (-4,-4)--
++(0, 21)..controls +(0,0.5)..++(0.5,0.5)--
++(4,0)..controls+(0.5,0) and +(0.5,0)..++(0,1)--
++(-4.5,0)..controls +(-0.5,0)..++(-0.5,-0.5)--
++(0,-22.5)..controls +(0,-0.5)..++(0.5,-0.5)--
++(32,0)..controls +(0.5,0)..++(0.5,0.5)--
++(0,22.5)..controls +(0,0.5)..++(-0.5,0.5)--
++(-4.5,0)..controls +(-0.5,0) and +(-0.5,0) ..++(0,-1)--
++(4,0)..controls +(0.5,0)..++(0.5,-0.5)--
++(0,-21) ..controls +(0,-0.5)..++(-0.5,-0.5)--
++(-31,0)..controls +(-0.5,0)..++(-0.5,0.5);

\strand (13, 19.6)..controls +(0.5,0) and +(0.5,0)..++(0,-7)..controls +(-0.5,0) and +(-0.5,0)..++(0,7);
\strand (13, 0.15)..controls +(0.5,0) and +(0.5,0)..++(0,-6.5)..controls +(-0.5,0) and +(-0.5,0)..++(0,6.5);
\strand (10, 17.6)..controls +(0.5,0) and +(0.5,0)..++(0,-3)..controls +(-0.5,0) and +(-0.5,0)..++(0,3);
\strand (10, 5.6)..controls +(0.5,0) and +(0.5,0)..++(0,-3)..controls +(-0.5,0) and +(-0.5,0)..++(0,3);
\strand (10, 11.6)..controls +(0.5,0) and +(0.5,0)..++(0,-3)..controls +(-0.5,0) and +(-0.5,0)..++(0,3);
\strand[brown] (14, 13.6)..controls +(0.5,0) and +(0.5,0)..++(0,-7)..controls +(-0.5,0) and +(-0.5,0)..++(0,7);
\strand[brown] (12, 13.6)..controls +(0.5,0) and +(0.5,0)..++(0,-7)..controls +(-0.5,0) and +(-0.5,0)..++(0,7);

\draw (13.5, 16) node[right] {\tiny $0$};
\draw (11.5, 10) node[left] {\tiny $0$};
\draw (11.5, 16) node[left] {\tiny $0$};
\draw (11.5, 4) node[left] {\tiny $0$};
\draw[brown] (14.2,10) node[right] {\tiny $\gamma_1$};
\draw[brown] (12,10) node[right] {\tiny $\gamma_2$};

 \draw (15, 19) node[circle, fill, inner sep=1.2pt, black] {}; 
\draw (15, 15) node[circle, fill, inner sep=1.2pt, black] {}; 
\draw (15, 11) node[circle, fill, inner sep=1.2pt, black] {}; 
\draw (15, 9) node[circle, fill, inner sep=1.2pt, black] {}; 
\draw (15, 3) node[circle, fill, inner sep=1.2pt, black] {}; 
\draw (13, 0.15) node[circle, fill, inner sep=1.2pt, black] {};

\flipcrossings {78, 82, 86, 90, 94, 106, 108, 110, 112, 114};
\flipcrossings {40, 101, 28, 29, 17, 18, 100, 55, 65, 48, 49, 103};
\flipcrossings {128, 130, 132, 134, 136, 138, 140, 142, 144, 146, 148};
\flipcrossings {14, 45, 21, 68, 52, 75, 9, 12, 43, 11, 42, 33, 37, 25, 72, 62, 24, 71, 35, 23, 70,22, 69, 60};
\flipcrossings {98, 118, 116, 120, 122, 124, 126,7};
\flipcrossings {1, 10, 41, 31, 19, 66, 58, 50, 73, 76, 26, 38, 53, 80, 84, 88, 92, 96, 3, 5};
\end{knot}
\useasboundingbox ([shift={(1mm,1mm)}]current bounding box.north east) rectangle ([shift={(-1mm,-1mm)}]current bounding box.south west);
\end{tikzpicture}

\caption{A Kirby diagram of $E(1)_{K_0}$ }
\label{Fig-23}
\end{center}
\end{figure}

\begin{itemize}
\item First we locate two $(-1)$-framed $2$-handles on the fourth and seventh strands (counted from the top) and then we perform  an operation in Figure~\ref{Fig-5}, so that we get a $(-1)$-framed $2$-handle along $\gamma_2$.
\item We perform an operation in Figure~\ref{Fig-7}. Then this $2$-handle will be located at $\gamma_1$ and the fourth and seventh strands will be twisted correspondingly.
\item We slide this $(-1)$-framed $2$-handle over three $0$-framed $2$-handles located at the top, middle and bottom left sides and then we slide it over two $0$-framed $2$-handles passing through the bottom dotted $1$-handle which are located at the first and fifth counted from the bottom. We slide it again over the three $0$-framed $2$-handles located at the top, middle and bottom left sides.
\item We slide a $(-1)$-framed $2$-handle on the $0$-framed longitude of $K_0$ over this $(-1)$-framed $2$-handle twice. Then the $2$-handle goes back to its original position $\gamma_2$ with a framing $(-1)$,  the knot $K_0$ is changed to $K_1$ and a $0$-framed longitude of $K_0$ becomes a $0$-framed longitude of $K_1$.
\item We perform an operation in Figure~\ref{Fig-5}  reversely to get a disjoint  $(+1)$-framed $2$-handle and this process sends back two vanishing cycles to its original position.
\end{itemize}

Finally we can show that \(E(1)_{K_0} \sharp \mathbb{CP}^2\) is diffeomorphic to  \( E(1) \sharp \mathbb{CP}^2 \)
by using the same argument as in $E(1)_{3_1^*}$ case with Figure~\ref{Fig-24}.

\begin{figure}[htbp]
\begin{center}
\begin{tikzpicture}[scale=0.8]
\begin{knot}[
	scale=0.35,
	clip width=5,
	clip radius = 2pt,
	end tolerance = 0.5pt,
	consider self intersections=true,
]

\strand (1,19) -- ++(22,0);
\strand (1,17) -- ++(22,0);
\strand (1,15)--++(3,0)
..controls +(1,0) and +(-1,0)..++(2,-2)..controls +(1,0) and +(-1,0)..++(2,-2)-- ++(8,0) 
..controls +(1,0) and +(-1,0)..++(2,2)..controls +(1,0) and +(-1,0)..++(2,2)--++(3,0);
\strand (1,13)--++(1,0)
..controls +(1,0) and +(-1,0)..++(2,-2)--++(2,0)..controls +(1,0) and +(-1,0)..++(2,2)-- ++(8,0) 
..controls +(1,0) and +(-1,0)..++(2,-2)--++(2,0)..controls +(1,0) and +(-1,0)..++(2,2)--++(1,0);
\strand (1,11)--++(1,0)
..controls +(1,0) and +(-1,0)..++(2,2)..controls +(1,0) and +(-1,0)..++(2,2)-- ++(12,0) 
..controls +(1,0) and +(-1,0)..++(2,-2)..controls +(1,0) and +(-1,0)..++(2,-2)--++(1,0);

\strand (1,9)--++(1,0)
..controls +(1,0) and +(-1,0)..++(2,-2)..controls +(1,0) and +(-1,0)..++(2,-2)-- ++(12,0) 
..controls +(1,0) and +(-1,0)..++(2,2)..controls +(1,0) and +(-1,0)..++(2,2)--++(1,0);
\strand (1,7) -- ++(1,0)
..controls +(1,0) and +(-1,0)..++(2,2)--++(2,0)..controls +(1,0) and +(-1,0)..++(2,-2)-- ++(8,0) 
..controls +(1,0) and +(-1,0)..++(2,2)--++(2,0)..controls +(1,0) and +(-1,0)..++(2,-2)--++(1,0);
\strand (1,5)--++(3,0)
..controls +(1,0) and +(-1,0)..++(2,2)..controls +(1,0) and +(-1,0)..++(2,2)-- ++(8,0) 
..controls +(1,0) and +(-1,0)..++(2,-2)..controls +(1,0) and +(-1,0)..++(2,-2)--++(3,0);
\strand (1,3) -- ++(22,0);
\strand (1,1) -- ++(22,0);

\strand (1,1)..controls +(-1,0) and +(-1,0)..++(0,2);
\strand (23,1)..controls +(1,0) and +(1,0)..++(0,2);

\strand (1,5)..controls +(-1,0) and +(-1,0)..++(0,2);
\strand (23,5)..controls +(1,0) and +(1,0)..++(0,2);

\strand (1,9)..controls +(-1,0) and +(-1,0)..++(0,2);
\strand (23,9)..controls +(1,0) and +(1,0)..++(0,2);

\strand (1,13)..controls +(-1,0) and +(-1,0)..++(0,2);
\strand (23,13)..controls +(1,0) and +(1,0)..++(0,2);

\strand (1,17)..controls +(-1,0) and +(-1,0)..++(0,2);
\strand (23,17)..controls +(1,0) and +(1,0)..++(0,2);

\strand[thin, blue] (1,20) -- ++(7,0);
\strand[thin, blue] (1,16.5) -- ++(6,0);
\strand[thin, blue] (1,15.5)--++(3,0)
..controls +(1.2,0) and +(-0.8,0)..++(2,-2)..controls +(1.2,0) and +(-0.8,0)..++(2,-2);
\strand[thin, blue] (1,12.5)--++(1,0)
..controls +(.8,0) and +(-1.2,0)..++(2,-2)--++(2,0)..controls +(1.2,0) and +(-.8,0)..++(2,2);
\strand[thin, blue] (1,11.5)--++(1,0)
..controls +(.8,0) and +(-1.2,0)..++(2,2)..controls +(.8,0) and +(-1.2,0)..++(2,2)-- ++(1,0);

\strand[thin, blue] (1,8.5)--++(1,0)
..controls +(.8,0) and +(-1.2,0)..++(2,-2)..controls +(.8,0) and +(-1.2,0)..++(2,-2)-- ++(1,0) ;
\strand[thin, blue] (1,7.5) -- ++(1,0)
..controls +(.8,0) and +(-1.2,0)..++(2,2)--++(2,0)..controls +(1.2,0) and +(-.8,0)..++(2,-2);
\strand[thin, blue] (1,4.5)--++(3,0)
..controls +(1.2,0) and +(-.8,0)..++(2,2)..controls +(1.2,0) and +(-.8,0)..++(2,2);
\strand[thin, blue] (1,3.5) -- ++(6,0);
\strand[thin, blue] (1,0.5) -- ++(7,0);

\strand[thin, blue] (1,0.5)..controls +(-1.5,0) and +(-1.5,0)..++(0,3);
\strand[thin, blue] (1,4.5)..controls +(-1.5,0) and +(-1.5,0)..++(0,3);
\strand[thin, blue] (1,8.5)..controls +(-1.5,0) and +(-1.5,0)..++(0,3);
\strand[thin, blue] (1,12.5)..controls +(-1.5,0) and +(-1.5,0)..++(0,3);
\strand[thin, blue] (1,16.5)..controls +(-1.5,0) and +(-1.5,0)..++(0,3.5);

\strand[thin, blue] (7,3.5)..controls +(0.5,0) and +(0.5,0)..++(0,1);
\strand[thin, blue] (7,15.5)..controls +(0.5,0) and +(0.5,0)..++(0,1);
\strand[thin, blue] (8,0.5)..controls +(0.6,0)..++(0.6,0.6)--++(0,5.8)..controls +(0,0.6)..++(-0.6,0.6);
\strand[thin, blue] (8,12.5)..controls +(0.6,0)..++(0.6,0.6)--++(0,6.3)..controls +(0,0.6)..++(-0.6,0.6);
\strand[thin, blue] (8,8.5)..controls +(0.6,0)..++(0.6,0.6)--++(0,1.8)..controls +(0,0.6)..++(-0.6,0.6);

\draw (4,20 ) node[above, blue] {\tiny $-1$};

\strand[thick, red] (16, 19.4)..controls +(0.5,0) and +(0.5,0)..++(0,-1)..controls +(-0.5,0) and +(-0.5,0)..++(0,1);
\strand[thick, red] (18, 19.4)..controls +(0.5,0) and +(0.5,0)..++(0,-1)..controls +(-0.5,0) and +(-0.5,0)..++(0,1);

\strand[thick, red] (7, 23.5)..controls +(1.5,0) and +(1.5,0) ..++(0,-3).. controls +(-1.5,0) and +(-1.5,0)..++(0,3);
\strand[thick, red] (8.5, 23.5)..controls +(1.5,0) and +(1.5,0) ..++(0,-3).. controls +(-1.5,0) and +(-1.5,0)..++(0,3);
\strand[thick, red] (10, 23.5)..controls +(1.5,0) and +(1.5,0) ..++(0,-3).. controls +(-1.5,0) and +(-1.5,0)..++(0,3);
\strand[thick, red] (11.5, 23.5)..controls +(1.5,0) and +(1.5,0) ..++(0,-5).. controls +(-1.5,0) and +(-1.5,0)..++(0,5);
\strand[thick, red] (13, 23.5)..controls +(1.5,0) and +(1.5,0) ..++(0,-3).. controls +(-1.5,0) and +(-1.5,0)..++(0,3);
\strand[thick, red] (14.5, 23.5)..controls +(1.5,0) and +(1.5,0) ..++(0,-3).. controls +(-1.5,0) and +(-1.5,0)..++(0,3);

\strand[thick, red] (7, 20.7)..controls +(0.5,0) and +(0.5,0) ..++(0,-1).. controls +(-0.5,0) and +(-0.5,0)..++(0,1);
\strand[thick, red] (13.4, -5.6)..controls +(0.5,0) and +(0.5,0) ..++(0,-1).. controls +(-0.5,0) and +(-0.5,0)..++(0,1);

\draw (16,19.4 ) node[above, red] {\tiny $-1$}; 
\draw (18,19.4) node[above, red] {\tiny $-1$}; 
\draw (7, 23.5) node[above, red] {\tiny $-2$}; 
\draw (8.5, 23.5) node[above, red] {\tiny $-2$}; 
\draw (10, 23.5) node[above, red] {\tiny $-2$}; 
\draw (11.5, 23.5) node[above, red] {\tiny $-1$}; 
\draw (13, 23.5) node[above, red] {\tiny $-2$}; 
\draw (14.5, 23.5) node[above, red] {\tiny $-2$}; 
\draw (7, 20.2) node[below left, red] {\tiny $-2$}; 
\draw (13.5, -6) node[right, red] {\tiny $-1$};

\strand[thin] (0,0)--
++(0, 1)..controls +(0,0.5)..++(0.5,0.5)--
++(1,0)..controls+(0.5,0) and +(0.5,0)..++(0,1)--
++(-1.5,0)..controls +(-0.5,0)..++(-0.5,-0.5)--
++(0,-2.5)..controls +(0,-0.5)..++(0.5,-0.5)--
++(24,0)..controls +(0.5,0)..++(0.5,0.5)--
++(0,2.5)..controls +(0,0.5)..++(-0.5,0.5)--
++(-1.5,0)..controls +(-0.5,0) and +(-0.5,0) ..++(0,-1)--
++(1,0)..controls +(0.5,0)..++(0.5,-0.5)--
++(0,-1) ..controls +(0,-0.5)..++(-0.5,-0.5)--
++(-23,0)..controls +(-0.5,0)..++(-0.5,0.5);
\strand[thin] (-1,-1)--
++(0, 6)..controls +(0,0.5)..++(0.5,0.5)--
++(1,0)..controls+(0.5,0) and +(0.5,0)..++(0,1)--
++(-1.5,0)..controls +(-0.5,0)..++(-0.5,-0.5)--
++(0,-7.5)..controls +(0,-0.5)..++(0.5,-0.5)--
++(26,0)..controls +(0.5,0)..++(0.5,0.5)--
++(0,7.5)..controls +(0,0.5)..++(-0.5,0.5)--
++(-1.5,0)..controls +(-0.5,0) and +(-0.5,0) ..++(0,-1)--
++(1,0)..controls +(0.5,0)..++(0.5,-0.5)--
++(0,-6) ..controls +(0,-0.5)..++(-0.5,-0.5)--
++(-25,0)..controls +(-0.5,0)..++(-0.5,0.5);
\strand[thin] (-2,-2)--
++(0, 11)..controls +(0,0.5)..++(0.5,0.5)--
++(2,0)..controls+(0.5,0) and +(0.5,0)..++(0,1)--
++(-2.5,0)..controls +(-0.5,0)..++(-0.5,-0.5)--
++(0,-12.5)..controls +(0,-0.5)..++(0.5,-0.5)--
++(28,0)..controls +(0.5,0)..++(0.5,0.5)--
++(0,12.5)..controls +(0,0.5)..++(-0.5,0.5)--
++(-2.5,0)..controls +(-0.5,0) and +(-0.5,0) ..++(0,-1)--
++(2,0)..controls +(0.5,0)..++(0.5,-0.5)--
++(0,-11) ..controls +(0,-0.5)..++(-0.5,-0.5)--
++(-27,0)..controls +(-0.5,0)..++(-0.5,0.5);
\strand[thin] (-3,-3)--
++(0, 16)..controls +(0,0.5)..++(0.5,0.5)--
++(3,0)..controls+(0.5,0) and +(0.5,0)..++(0,1)--
++(-3.5,0)..controls +(-0.5,0)..++(-0.5,-0.5)--
++(0,-17.5)..controls +(0,-0.5)..++(0.5,-0.5)--
++(30,0)..controls +(0.5,0)..++(0.5,0.5)--
++(0,17.5)..controls +(0,0.5)..++(-0.5,0.5)--
++(-3.5,0)..controls +(-0.5,0) and +(-0.5,0) ..++(0,-1)--
++(3,0)..controls +(0.5,0)..++(0.5,-0.5)--
++(0,-16) ..controls +(0,-0.5)..++(-0.5,-0.5)--
++(-29,0)..controls +(-0.5,0)..++(-0.5,0.5);
\strand[thin] (-4,-4)--
++(0, 21)..controls +(0,0.5)..++(0.5,0.5)--
++(4,0)..controls+(0.5,0) and +(0.5,0)..++(0,1)--
++(-4.5,0)..controls +(-0.5,0)..++(-0.5,-0.5)--
++(0,-22.5)..controls +(0,-0.5)..++(0.5,-0.5)--
++(32,0)..controls +(0.5,0)..++(0.5,0.5)--
++(0,22.5)..controls +(0,0.5)..++(-0.5,0.5)--
++(-4.5,0)..controls +(-0.5,0) and +(-0.5,0) ..++(0,-1)--
++(4,0)..controls +(0.5,0)..++(0.5,-0.5)--
++(0,-21) ..controls +(0,-0.5)..++(-0.5,-0.5)--
++(-31,0)..controls +(-0.5,0)..++(-0.5,0.5);

\strand (13, 19.6)..controls +(0.5,0) and +(0.5,0)..++(0,-7)..controls +(-0.5,0) and +(-0.5,0)..++(0,7);
\strand (13, 7.6)..controls +(0.5,0) and +(0.5,0)..++(0,-7)..controls +(-0.5,0) and +(-0.5,0)..++(0,7);
\strand (13, 0.15)..controls +(0.5,0) and +(0.5,0)..++(0,-6.5)..controls +(-0.5,0) and +(-0.5,0)..++(0,6.5);
\strand (12, 17.6)..controls +(0.5,0) and +(0.5,0)..++(0,-3)..controls +(-0.5,0) and +(-0.5,0)..++(0,3);
\strand (12, 5.6)..controls +(0.5,0) and +(0.5,0)..++(0,-3)..controls +(-0.5,0) and +(-0.5,0)..++(0,3);
\strand[brown] (10, 13.6)..controls +(0.5,0) and +(0.5,0)..++(0,-3)..controls +(-0.5,0) and +(-0.5,0)..++(0,3);
\strand[brown] (16, 9.6)..controls +(0.5,0) and +(0.5,0)..++(0,-3)..controls +(-0.5,0) and +(-0.5,0)..++(0,3);

\draw (13.5, 16) node[right] {\tiny $0$};
\draw (13.5, 4) node[right] {\tiny $0$};
\draw (11.5, 16) node[left] {\tiny $0$};
\draw (11.5, 4) node[left] {\tiny $0$};
\draw[brown] (10,13.6) node[above] {\tiny $\gamma_3$};
\draw[brown] (16,6.6) node[below] {\tiny $\gamma_4$};

 \draw (15, 19) node[circle, fill, inner sep=1.2pt, black] {}; 
\draw (15, 15) node[circle, fill, inner sep=1.2pt, black] {}; 
\draw (15, 11) node[circle, fill, inner sep=1.2pt, black] {}; 
\draw (15, 9) node[circle, fill, inner sep=1.2pt, black] {}; 
\draw (15, 3) node[circle, fill, inner sep=1.2pt, black] {}; 
\draw (13, 0.15) node[circle, fill, inner sep=1.2pt, black] {};

\flipcrossings {1,10,35,27, 19, 62, 54, 44, 65, 70, 14, 39, 9, 12, 37, 29, 11, 36, 31, 21, 48, 69, 56, 46, 67, 72, 45, 66, 126, 128, 130, 132, 134, 136, 138, 140, 142, 144, 124};
\flipcrossings {74, 78, 82, 86, 90, 102, 104, 106, 108, 110};
\flipcrossings {34, 24, 25, 97, 17, 18, 96, 51, 61, 42,43,99};
\flipcrossings {94, 114, 112, 116, 118, 120, 122, 7, 3, 5};
\flipcrossings {76, 80, 84, 88, 92, 22, 32, 49};
\flipcrossings {57, 63};
\end{knot}
\useasboundingbox ([shift={(1mm,1mm)}]current bounding box.north east) rectangle ([shift={(-1mm,-1mm)}]current bounding box.south west);
\end{tikzpicture}

\caption{Another Kirby diagram of $E(1)_{K_0}$ }
\label{Fig-24}
\end{center}
\end{figure}

\begin{rem}
As mentioned in Section 2, R. Gompf~\cite{Gompf:2016a, Gompf:2016} constructed an infinite order cork and
M. Tange~\cite{Tange:2016} also constructed an example of   infinite order corks, a $\mathbb{Z}^k$-cork.
Note that Gompf's $\mathbb{Z}$-cork is related to a knot surgery $4$-manifold $E(n)_{K_k}$, where $K_k$ is the twisted knot and Tange's $\mathbb{Z}^k$-cork is related to $E(k)_{K(n_1, n_2, \cdots, n_k)}$, where $K(n_1, n_2, \cdots, n_k)=K_{1, n_1} \sharp K_{2, n_2} \sharp \cdots \sharp K_{k, n_k}$ and $K_{m,n}$ is a $2$-bridge knot of type $C(2m+1, -2n, 2)$.
So all these examples can also be dissolved under a connected sum with $\mathbb{CP}^2$. 
\end{rem}

\begin{rem}
Although it is known that every elliptic Lefschetz fibration is almost completely decomposable~\cite{Mandelbaum:1985ew}, it is not sure whether every simply connected symplectic Lefschetz fibration over $S^2$ with a high genus fiber is almost completely decomposable or not. Nevertheless, since a family of knot surgery $4$-manifolds $E(n)_K$ with a fibered knot $K$ admit a symplectic Lefschetz fibration over $S^2$ with a high genus fiber~\cite{FS:2004} and they are almost completely decomposable, it is an intriguing question whether all simply connected symplectic Lefschetz fibration over $S^2$ are almost completely decomposable or not.
\end{rem}


\providecommand{\bysame}{\leavevmode\hbox to3em{\hrulefill}\thinspace}

\end{document}